\documentclass[a4paper, 11pt]{article}

\usepackage[
            includefoot,  %Uncomment to put page number above margin
            marginparwidth=0in,     % Length of section titles
            marginparsep=0in,       % Space between titles and text
            margin=1.45in,               % 1 inch margins
            includemp]{geometry}

\usepackage{bbm}
\usepackage{float}
\usepackage{graphicx}                  % derni\`ere \'etant la langue principale
\usepackage{amssymb}
\usepackage{amsfonts}
\RequirePackage{amsmath}
\RequirePackage{amsthm}

\usepackage{color}

\usepackage{fancyhdr}

\newtheorem{thm}{Theorem}[section]
\newtheorem{defn}[thm]{Definition}
\newtheorem{prop}[thm]{Proposition}

\newtheorem{cor}[thm]{Corollary}

\pagestyle{myheadings}

\newtheorem{rmq}{Remark}[section]

\DeclareMathOperator{\Tr}{Tr}
\newcommand{\ev}{\operatorname{ev}}

\newcommand{\N}{\mathbb{N}}

%\operatorname{d}\!}

\begin{document}

\title{A note on existence of free Stein kernels}
\author{Guillaume C\'ebron\thanks{Institut de Math\'ematiques de Toulouse, Universit\'e de Toulouse, guillaume.cebron@math.univ-toulouse.fr}, \hspace{1mm} Max Fathi\thanks{CNRS and Institut de Math\'ematiques de Toulouse, Universit\'e de Toulouse, max.fathi@math.univ-toulouse.fr} \hspace{1mm} and Tobias Mai\thanks{ Faculty of Mathematics, Saarland University, mai@math.uni-sb.de}}
\date{\today}

\maketitle

\begin{abstract}
Stein kernels are a way of comparing probability distributions, defined via integration by parts formulas. We provide two constructions of Stein kernels in free probability. One is given by an explicit formula, and the other via free Poincar\'e inequalities. In particular, we show that unlike in the classical setting, free Stein kernels always exist. As corollaries, we derive new bounds on the rate of convergence in the free CLT, and a strengthening of a characterization of the semicircular law due to Biane. 
\end{abstract}

MSC: 46L54; 60F05.

\section{Introduction}

Let $M$ be a von Neumann algebra and $\varphi$ a faithful normal state. Let $\mathcal{P} = \mathbb{C}\langle t_1,\dots,t_n \rangle$ be the algebra of noncommutative polynomials in $n$ variables $t_1,\dots,t_n$.

We are interested in Stein kernels, which were defined in \cite{FN17} as follows:  

\begin{defn}
A free Stein kernel for a $n$-tuple $X$, with respect to a potential $V \in \mathcal{P}$, is an element $A \in L^2(M_n(M \bar{\otimes} M^{op}, (\varphi \otimes \varphi^{op})\circ \Tr))$ such that for any $P \in \mathcal{P}^n$ we have
$$\langle [\mathcal{D}V](X), P(X)\rangle_{\varphi} = \langle A, [\mathcal{J}P](X) \rangle_{(\varphi \otimes \varphi^{op})\circ \Tr}.$$
The free Stein discrepancy of $X$ relative to $V$ is defined as
$$\Sigma^*(X|V) := \inf_A \|A - (1 \otimes 1^{op})\otimes I_n\|_{L^2(M_n(M \bar{\otimes} M^{op}, (\varphi \otimes \varphi^{op})\circ \Tr))}$$
where the infimum is taken over all admissible Stein kernels $A$ of $X$. 
\end{defn}

In this definition, $\mathcal{D}V$ is the cyclic gradient of the potential $V$, and $\mathcal{J}P$ is the Jacobian matrix of $P$. We shall recall the precise definitions of these notions in Section 2.

The motivation behind this definition is that $(1 \otimes 1^{op})\otimes I_n$ is a Stein kernel for $X$ relative to $V$ if and only if $X$ is a free Gibbs state with potential $V$. Hence these kernels and the discrepancy give an estimate of how far away $X$ is from being this Gibbs state. When $V$ is the quadratic potential $\frac{1}{2}\sum_{i=1}^n t_i^2$, this corresponds to the situation where the reference state is an $n$-dimensional semicircular law. Stein kernels have been used to establish functional inequalities, and to estimate rates of convergence in approximation theorems, such as the free central limit theorem. Their classical counterpart are one way of implementing the now widely used Stein's method \cite{Ste72, Ros11} to bound distances between probability distributions. Another way of implementing Stein's method that has recently been adapted to the context of free probability is to use it in combination with Malliavin calculus \cite{KNPS}.

\begin{rmq}
Instead of working with polynomials, we could consider power series $\mathcal{P}^{(R)}$, as in \cite{FN17}. All the results proven below remain valid in that generality, by approximation with polynomials. 
\end{rmq}

An immediate constraint is that for a Stein kernel to exist, a necessary condition is that $\varphi([\mathcal{D}V](X)) = (0,\dots,0)$, since otherwise we would get a contradiction by testing the above identity with constant polynomials. We shall show that this is also a sufficient condition: 

\begin{thm}
Given any potential $V\in \mathcal{P}$ such that $\varphi([\mathcal{D}V](X)) = (0,\dots,0)$ and any self-adjoint $n$-tuple $X = (x_1,\dots,x_n) \in M^n$,  there exists a Stein kernel for $X$ relative to $V$. 
\end{thm}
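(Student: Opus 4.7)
The plan is to construct an explicit Stein kernel $A$, so that existence follows by a direct computation rather than an abstract functional-analytic argument (which, pursued naively, would require a free Poincar\'e-type inequality not available in general). The key tool is a non-commutative telescoping identity in $M \bar{\otimes} M^{op}$: for any polynomial $Q \in \mathcal{P}$,
$$Q(X) \otimes 1 - 1 \otimes Q(X) \;=\; \sum_{i=1}^n (\partial_i Q)(X) \cdot \bigl(X_i \otimes 1 - 1 \otimes X_i\bigr).$$
This is verified by induction on the degree of $Q$, starting from $Q = t_k$ (trivial) and propagating through the Leibniz rule $\partial_i(Q_1 Q_2) = (\partial_i Q_1)(1 \otimes Q_2) + (Q_1 \otimes 1)(\partial_i Q_2)$ together with the algebra structure on $M \bar{\otimes} M^{op}$; careful bookkeeping shows that the opposite multiplication in the second factor is exactly what is needed to match the two sides.

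Guided by this identity, I would define
$$A_{ij} \;:=\; \bigl([\mathcal{D}_j V](X) \otimes 1\bigr) \cdot \bigl(X_i \otimes 1 - 1 \otimes X_i\bigr) \;=\; [\mathcal{D}_j V](X) X_i \otimes 1 \,-\, [\mathcal{D}_j V](X) \otimes X_i,$$
and assemble these into a matrix $A = (A_{ij})_{i,j=1}^n \in M_n(M \bar{\otimes} M^{op})$. Since each $A_{ij}$ is a finite combination of bounded elementary tensors, $A$ is bounded and thus automatically lies in $L^2$. To check that $A$ satisfies the Stein kernel identity, one begins from
$$\varphi\bigl([\mathcal{D}_j V](X)^* P_j(X)\bigr) \;=\; (\varphi \otimes \varphi^{op})\Bigl(\bigl([\mathcal{D}_j V](X)^* \otimes 1\bigr)\bigl(P_j(X) \otimes 1 - 1 \otimes P_j(X)\bigr)\Bigr),$$
which uses the hypothesis $\varphi([\mathcal{D}V](X)) = 0$ to cancel the crossed term $\varphi([\mathcal{D}_j V](X)^*)\varphi(P_j(X)) = 0$. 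Applying the telescoping identity to $P_j$ and then invoking the traciality of $\varphi \otimes \varphi^{op}$ to cycle $[\mathcal{D}_j V](X)^* \otimes 1$ past $(\partial_i P_j)(X)$, one arrives at $\sum_i (\varphi \otimes \varphi^{op})(A_{ij}^* \cdot (\partial_i P_j)(X)) = \sum_i \langle A_{ij}, (\partial_i P_j)(X)\rangle$. Summing over $j$ yields the defining equation for the Stein kernel.

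The principal difficulty is purely notational: one has to keep careful track of the opposite-algebra structure on the second tensor factor, the $*$-operation on $M \bar{\otimes} M^{op}$, and the correct side of each multiplication throughout the derivation, especially when evaluating $\partial_i Q$ (which naturally lives in $\mathcal{P} \otimes \mathcal{P}$ with ordinary product) inside $M \bar{\otimes} M^{op}$. Once these conventions are pinned down, the proof reduces to Leibniz plus cyclicity of the trace, and it works without any further regularity hypothesis on the joint distribution of $X$.
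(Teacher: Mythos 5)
Your construction is essentially the paper's own proof in Section 2: your ``telescoping identity'' is exactly the identity \eqref{eq:Voi-formula}, $\delta(P)=\sum_{i}\partial_i(P)\sharp\delta(t_i)$, evaluated at $X$ in $M\bar\otimes M^{op}$, and your kernel $A_{ij}=\bigl([\mathcal{D}_jV](X)\otimes 1\bigr)\cdot\bigl(x_i\otimes 1-1\otimes x_i\bigr)$ is an unsymmetrized (and index-transposed) variant of the paper's kernel $\tfrac12\bigl(\delta(\mathcal{D}_iV)\sharp\delta(t_j)\bigr)_{i,j}$. Your variant does define a Stein kernel, so the formula itself is fine (it just provides one more instance of non-uniqueness). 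Incidentally, your opening remark that the functional-analytic route ``would require a free Poincar\'e-type inequality not available in general'' is not accurate: Proposition \ref{prop:Poincare} shows the free Poincar\'e inequality always holds for bounded tuples, and Section 3 of the paper uses precisely that route as a second construction.

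There is, however, one step in your verification that is not justified in the stated generality: you ``invoke the traciality of $\varphi\otimes\varphi^{op}$ to cycle $[\mathcal{D}_jV](X)^*\otimes 1$ past $(\partial_iP_j)(X)$''. The paper only assumes that $\varphi$ is a faithful normal state, not a trace (traciality is explicitly treated as an extra hypothesis in the remark following Proposition \ref{prop:Poincare}), and $\varphi\otimes\varphi^{op}$ is tracial only when $\varphi$ is; so this cyclic permutation is unavailable in general. The gap is easy to repair, and the repair is exactly how the paper organizes the computation: collapse the sum over $i$ against the adjoint of the Jacobian \emph{before} applying the state. Indeed, $\sum_i(x_i\otimes 1-1\otimes x_i)\cdot\bigl((\partial_iP_j)(X)\bigr)^*=\bigl(\sum_i(\partial_iP_j)\sharp\delta(t_i)\bigr)^*(X)=\delta(P_j)^*(X)=P_j(X)^*\otimes 1-1\otimes P_j(X)^*$, hence $\sum_i A_{ij}\cdot\bigl((\partial_iP_j)(X)\bigr)^*=[\mathcal{D}_jV](X)P_j(X)^*\otimes 1-[\mathcal{D}_jV](X)\otimes P_j(X)^*$; applying $\varphi\otimes\varphi^{op}$, the second term factorizes as $\varphi\bigl([\mathcal{D}_jV](X)\bigr)\varphi\bigl(P_j(X)^*\bigr)=0$ by hypothesis, and the first gives $\varphi\bigl([\mathcal{D}_jV](X)P_j(X)^*\bigr)$, which summed over $j$ is $\langle[\mathcal{D}V](X),P(X)\rangle_\varphi$. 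No cyclicity of the state is used; the placement of the unit tensor legs does all the work (this is also what the symmetrization $\tfrac12\delta(\mathcal{D}_iV)$ achieves in the paper's version). Finally, pin down your conventions: the paper's pairing is $\langle A,B\rangle=(\varphi\otimes\varphi^{op})\circ\Tr(AB^*)$ with $(\mathcal{J}P)_{ij}=\partial_jP_i$, so your matrix must be transposed to match, and the Leibniz rule for $\partial_i$ is with respect to the $\mathcal{P}$-$\mathcal{P}$-bimodule action on $\mathcal{P}\otimes\mathcal{P}$, not the $\sharp$-product of $M\bar\otimes M^{op}$ --- the two coexist, but conflating them is exactly where order errors would creep into your induction.
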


This result stands in contrast with the situation in the classical setting, where Stein kernels may not exist, and known constructions typically require some regularity condition \cite{Cha09, CFP17, NP12}.

We shall give two constructions that work in full generality: one explicit construction, with an exact formula, and one implicit, via the Riesz representation theorem. Interestingly, the two constructions do \emph{not} give the same kernels, which in particular implies that free Stein kernels are never unique. In the classical setting, Stein kernels are not generally unique in dimension higher than two, but are unique for one-dimensional measures with nice density. 

An immediate consequence of our constructions will be the following rate of convergence in the free CLT, obtained via Theorem 5.2 of \cite{FN17}: 

\begin{thm}
Let $(X^{(i)})_{i \in \N}$ be a sequence of freely independent, identically distributed $n$-tuples of self-adjoint random variables. Assume that each of them are centered, and that the covariance matrix of $X = (x_1,\dots,x_n)$ is the identity. Let $Y^k = k^{-1/2}\left(\sum_{i=1}^{k} X^{(i)}\right)$. We have
$$\Sigma^*\left(Y^k\left|\frac{1}{2}\sum_{i=1}^n t_i^2 \right.\right) \leq \frac{C}{\sqrt{k}}$$
with $C=\sqrt{n \min\left(C_{opt}(X)-1,(n+nm_4-1)/2\right)}$ (where $m_4$ is the maximum fourth moment $\max_{i=1}^n\varphi(x_i^4)$ and $C_{opt}(X)$ is the Poincar\'e constant of $X$). If moreover the non-microstates free Fisher information of $X$ is finite, then the non-microstates free relative entropy $\chi^*$ satisfies
$$\chi^*\left(Y^k\left|\frac{1}{2}\sum_{i=1}^n t_i^2 \right.\right) \leq \frac{C\log k}{k}$$
with a constant $C$ that only depends on the free Fisher information and the norm of $X$. 
\end{thm}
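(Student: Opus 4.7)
The strategy follows the standard Stein-method template: first bound $\Sigma^*(Y^k|V)$ for $V = \frac{1}{2}\sum_i t_i^2$ directly, yielding the first inequality, and then feed this estimate into the HSI-type functional inequality of Theorem 5.2 of \cite{FN17} to obtain the entropy bound. Since free Stein kernels always exist by Theorem 1.2, this approach is unconditional and reduces the problem to two sub-tasks: (i) bounding the single-variable discrepancy $\Sigma^*(X|V)$, and (ii) showing the correct $1/\sqrt{k}$ scaling under free convolution of i.i.d.\ copies.

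For (i), both constructions from the paper are applicable, because $X$ is centered and $V$ is quadratic, so $\mathcal{D}V(X) = X$ has vanishing state. The Riesz/Poincar\'e construction should give $\|A - (1\otimes 1^{op})\otimes I_n\|^2 \le n(C_{opt}(X) - 1)$, which is the free analogue of the classical identity tying Stein discrepancy to the Poincar\'e defect. The explicit construction, specialized to the quadratic potential and unit-covariance $X$, should produce a kernel whose defect is controlled by fourth moments, yielding (after an algebraic computation) $\|A - (1\otimes 1^{op})\otimes I_n\|^2 \le n(n + nm_4 - 1)/2$. Taking the better of the two supplies the expression for $C$.

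For (ii), given Stein kernels $A^{(i)}$ for each $X^{(i)}$, viewed in the ambient algebra, I would define a candidate kernel for $Y^k$ by
$$\tilde A := \frac{1}{k}\sum_{i=1}^k A^{(i)}.$$
The Stein identity for $\tilde A$ follows from expanding $\varphi(Y^k_j P(Y^k)) = k^{-1/2}\sum_i \varphi(x^{(i)}_j P(Y^k))$, applying the Stein identity for $X^{(i)}$ to the polynomial $P(k^{-1/2}\sum_\ell t^{(\ell)})$, and using the chain rule $[\mathcal{J}_{x^{(i)}}P](Y^k) = k^{-1/2}[\mathcal{J}P](Y^k)$. The $L^2$-norm estimate uses freeness of the $X^{(i)}$ to conclude that the centered kernels $A^{(i)} - (1\otimes 1^{op})\otimes I_n$ are orthogonal in $L^2(M_n(M\bar\otimes M^{op}))$, so the squared discrepancy of $\tilde A$ equals $k^{-2}$ times the sum of $k$ identical single-variable squared discrepancies, whence $\Sigma^*(Y^k|V) \le k^{-1/2}\Sigma^*(X|V)$. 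Combined with (i) this yields the asserted rate.

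For the entropy bound, Theorem 5.2 of \cite{FN17} should provide an inequality of the form $\chi^*(Y^k|V) \le c_1 \Sigma^*(Y^k|V)^2 \log(c_2/\Sigma^*(Y^k|V)^2)$, with $c_2$ depending on $\Phi^*(Y^k)$ and $\|Y^k\|$. Subadditivity of the non-microstates Fisher information under free convolution (a free Stam inequality) controls $\Phi^*(Y^k)$ by $\Phi^*(X)$, while $\|Y^k\|$ is controlled by $\|X\|$, so $c_2$ is uniform in $k$; substituting $\Sigma^*(Y^k|V)^2 \lesssim 1/k$ gives $\chi^*(Y^k|V) \lesssim \log(k)/k$. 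The main technical obstacle I anticipate is the orthogonality argument in step (ii): one has to invoke freeness of the $X^{(i)}$-subalgebras on each tensor factor of $M\bar\otimes M^{op}$ simultaneously, and check that the mixing of variables created by evaluating $P$ on the sum rather than on a single summand does not spoil the clean telescoping of the Stein identities for the individual $A^{(i)}$ into a Stein identity for $\tilde A$.
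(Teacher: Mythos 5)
Your proposal matches the paper's argument: the paper obtains this theorem by combining the two bounds on $\Sigma^*\left(X\left|\frac{1}{2}\sum_{i=1}^n t_i^2\right.\right)$ from its explicit-kernel theorem and from Theorem \ref{thm_stein_const2} (giving exactly your constant $C$) with Theorem 5.2 of \cite{FN17}, which supplies both the $1/\sqrt{k}$ decay of the discrepancy under free convolution and the HSI-type entropy bound via the free Stam inequality. The scaling argument you sketch in step (ii) — including the delicate extension of each single-copy Stein identity to polynomials in the full free family — is precisely the content of the cited result in \cite{FN17}, which the paper does not reprove.
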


As an immediate corollary obtained via a comparison to the Stein discrepancy proved in \cite[Section 2.5]{C18}, we obtain the rate of convergence $W_2(Y^k,S)\leq C \cdot k^{-1/2}$ for the non-commutative $2$-Wasserstein distance of \cite{BV01} from $Y^k$ to a $n$-tuple $S$ of free standard semicircular variables. Interestingly, obtaining such rates in classical probability under boundedness or moment assumptions is much more difficult, since Stein kernels may not exist \cite{Bon16, Zha17}. For one-dimensional free probability, a sharper rate, without the logarithmic factor, has been obtained in \cite{CG13}. A rate of convergence at the level of the Cauchy transforms of polynomial evaluations was obtained in \cite{MS13}; see \cite{BM18} for an improved result.

\section{Explicit formula}

In this section, we want to prove existence of free Stein kernels for an arbitrary potential by providing an explicit formula. This requires some notation.

The unital complex algebra $\mathcal{P} = \mathbb{C}\langle t_1,\dots,t_n\rangle$ becomes a $\ast$-algebra if it is endowed with the involution $\ast:\mathcal{P}\to \mathcal{P}$, that is the unital antilinear map uniquely determined by $(PQ)^*=Q^*P^*$ and $t_i^*=t_i$ for $i=1,\dots,n$.
This map naturally induces an involution $\ast: \mathcal{P} \otimes \mathcal{P} \to \mathcal{P} \otimes \mathcal{P}$ on the algebraic tensor product $\mathcal{P}\otimes \mathcal{P}$ over $\mathbb{C}$ by $(P\otimes Q)^* = P^*\otimes Q^*$.

Furthermore, we define on $\mathcal{P} \otimes \mathcal{P}$ the binary operation
$$\sharp:\ (\mathcal{P} \otimes \mathcal{P})^2 \to \mathcal{P} \otimes \mathcal{P},\qquad (P_1 \otimes P_2) \sharp (Q_1 \otimes Q_2) = (P_1Q_1) \otimes (Q_2P_2),$$
as well as the \emph{multiplication mapping}
$$m:\ \mathcal{P} \otimes \mathcal{P} \to \mathcal{P}, \qquad P \otimes Q \mapsto PQ,$$
and the \emph{flip mapping}
$$\sigma:\ \mathcal{P} \otimes \mathcal{P} \to \mathcal{P} \otimes \mathcal{P},\qquad P\otimes Q \mapsto Q\otimes P.$$

We may view $\mathcal{P}\otimes \mathcal{P}$ as an $\mathcal{P}$-$\mathcal{P}$-bimodule via the natural actions that are determined by $P_1\cdot (Q_1\otimes Q_2)\cdot P_2= (P_1 Q_1)\otimes (Q_2 P_2)$. Accordingly, we can consider derivations $d$ on $\mathcal{P}$ with values in $\mathcal{P}\otimes \mathcal{P}$, i.e., linear maps $d: \mathcal{P} \to \mathcal{P} \otimes \mathcal{P}$ that satisfy the \emph{Leibniz rule}
$$d(PQ) = d(P) \cdot Q + P \cdot d(Q) \qquad\text{for all $P,Q\in\mathcal{P}$}.$$
Based on this terminology, we may introduce the \emph{non-commutative derivatives} $\partial_1,\dots,\partial_n$ as the unique derivations
$$\partial_i:\ \mathcal{P} \to \mathcal{P} \otimes \mathcal{P},\qquad i=1,\dots,n,$$
that satisfy $\partial_i(t_j) = \delta_{ij} 1 \otimes 1$ for $j=1,\dots,n$. Further, we will work with the linear map
$$\delta:\ \mathcal{P} \to \mathcal{P} \otimes \mathcal{P}, \qquad P \mapsto P\otimes 1 - 1 \otimes P,$$
which is easily checked to be a derivation as well. Note that we can write $\delta(P)=[P,1\otimes 1]$ with respect to the $\mathcal{P}$-$\mathcal{P}$-bimodule structure of $\mathcal{P}\otimes \mathcal{P}$.
From their definitions, one easily infers that those derivations are related by the formula
\begin{equation}\label{eq:Voi-formula}
\delta(P) = \sum_{i=1}^n \partial_i(P) \sharp \delta(t_i) \qquad\text{for all $P\in\mathcal{P}$}.
\end{equation}
We point out that this formula underlies the proof of Proposition \ref{prop:Poincare} below.

For any given $P=(P_1,\dots,P_n) \in \mathcal{P}^n$, the \emph{Jacobian matrix} $\mathcal{J}P$ of $P$ is defined by
$$\mathcal{J}P=\left(\begin{array}{ccc}
\partial_1 P_1 & \cdots& \partial_n P_1  \\ 
\vdots & \ddots & \vdots \\ 
\partial_1 P_n  & \cdots & \partial_n P_n 
\end{array} \right) \in M_n(\mathcal{P} \otimes \mathcal{P}).$$

Associated to the noncommutative derivatives are the so-called \emph{cyclic derivatives}; those are the linear maps
$$\mathcal D_i:\ \mathcal{P} \to \mathcal{P},\qquad i=1,\dots,n,$$
that are defined by $\mathcal D_i := m\circ \sigma \circ \partial_i$. For any given potential $V \in \mathcal{P}$, we denote by $\mathcal{D}V$ the \emph{cyclic gradient} of $V$ which is defined as
$$\mathcal{D}V = (\mathcal{D}_1V, \ldots, \mathcal{D}_nV) \in \mathcal{P}^n $$

For every $n$-tuple $X=(x_1,\dots,x_n)$ of self-adjoint operators in the von Neumann algebra $M$, we have a canonical evaluation homomorphism $\ev_X: \mathcal{P} \to M$ that is determined by $\ev_X(1)=1$ and $\ev_X(t_i) = x_i$ for $i=1,\dots,n$; we put $P(X) := \ev_X(P)$.
Analogously, we define $Q(X) := (\ev_X \otimes \ev_X)(Q)$ for every $Q\in \mathcal{P} \otimes \mathcal{P}$. Even though by definition $Q(X) \in M\otimes M$, we will often view $Q(X)$ as an element in $M \otimes M^{op}$; note that then $(Q_1 \sharp Q_2)(X) = Q_1(X) \cdot Q_2(X)$ for all $Q_1,Q_2\in \mathcal{P} \otimes \mathcal{P}$.
Similarly, we can evaluate elements in $\mathcal{P}^n$ and $M_n(\mathcal{P} \otimes \mathcal{P})$.

On the complex vector spaces $M^n$ and $M_n(M \otimes M^{op})$, we may introduce the inner products $\langle \cdot,\cdot \rangle_{\varphi}$ and $\langle \cdot,\cdot\rangle_{(\varphi \otimes \varphi^{op})\circ \Tr}$ by
$$\langle X,Y\rangle_{\varphi}=\sum_{i=1}^n\varphi(x_i y_i^*)$$
for all $X=(x_1,\ldots, x_n), Y=(y_1,\ldots, y_n) \in M^n$ and by
$$\langle A,B\rangle_{(\varphi \otimes \varphi^{op})\circ \Tr}=\sum_{i,j=1}^n(\varphi \otimes \varphi^{op})(a_{i,j} \cdot b_{i,j}^*)$$
for all $A=(a_{i,j}),B=(b_{i,j})\in M_n(M \otimes M^{op})$, respectively. Note that
$$\langle A,B\rangle_{(\varphi \otimes \varphi^{op})\circ \Tr}=(\varphi \otimes \varphi^{op})\circ \Tr(A \cdot B^*)$$
where $B^*$ is given by $B^*=(b_{j,i}^*)_{i,j=1}^n$.

Now, we are ready to formulate the announced theorem which gives a free Stein kernel by an explicit formula. This is inspired by an unpublished note of Giovanni Peccati and Roland Speicher, where $\frac{1}{2}(X^2 \otimes 1 + 1 \otimes X^2 - 2 X \otimes X)$ was given as an expression for a free Stein kernel in the single variable case $n=1$ and for the special potential $V=\frac{1}{2}t^2$.

\begin{thm}
Given any $V\in \mathcal{P}$, the matrix
$$A :=\frac{1}{2}\Big(\delta(\mathcal{D}_iV)\sharp [\delta(t_j)]\Big)_{i,j=1}^n\in M_n(\mathcal{P} \otimes \mathcal{P})$$
satisfies
$$\langle [\mathcal{D}V](X)-\varphi([\mathcal{D}V](X)), P(X)\rangle_{\varphi} = \langle A(X), [\mathcal{J}P](X) \rangle_{(\varphi \otimes \varphi^{op})\circ \Tr}.$$
In particular, if $\varphi([\mathcal{D}V](X)) = (0,\ldots,0)$, then $A(X)\in M_n(M \otimes M^{op})$ is a free Stein kernel for $X$ relative to $V$.

Moreover, if $V=\frac{1}{2}\sum_{i=1}^n t_i^2$ and $X$ is centered, then
$$\Sigma^*(X|V)^2 \leq \|A - (1 \otimes 1^{op})\otimes I_n\|_{L^2(M_n(M \bar{\otimes} M^{op}, (\varphi \otimes \varphi^{op})\circ \Tr))}\leq (n^2+n^2 m_4-n)/2. $$
\end{thm}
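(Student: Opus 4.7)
The plan is to split the proof into two parts: verifying the Stein kernel identity, and computing the norm bound when $V = \tfrac{1}{2}\sum_k t_k^2$. For the identity, my starting point is the elementary tracial computation
$$\varphi(Q P^*) - \varphi(Q)\varphi(P^*) = \tfrac{1}{2}(\varphi\otimes\varphi^{op})\bigl(\delta(Q)\sharp \delta(P^*)\bigr)$$
valid for any $Q,P\in\mathcal{P}$. This is obtained by expanding $\delta(Q)\sharp\delta(P^*) = QP^*\otimes 1 - Q\otimes P^* - P^*\otimes Q + 1\otimes P^*Q$ in $\mathcal{P}\otimes\mathcal{P}$, applying $\varphi\otimes\varphi^{op}$, and collapsing the four terms using $\varphi(P^*Q) = \varphi(QP^*)$, which is where the factor $\tfrac{1}{2}$ enters.

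The next ingredient is a ``flipped'' Voiculescu formula,
$$\delta(P^*) = \sum_j \delta(t_j) \sharp (\partial_j P)^*,$$
obtained by applying the flip $\sigma$ to both sides of \eqref{eq:Voi-formula} using $\sigma(A\sharp B) = \sigma(B)\sharp \sigma(A)$, $\sigma(\delta(R)) = -\delta(R)$, and the compatibility relation $\sigma(\partial_j(P^*)) = (\partial_j P)^*$ (itself verified on monomials via the Leibniz rule and $(PQ)^* = Q^*P^*$). Substituting this into $\delta(\mathcal{D}_iV)\sharp \delta(P_i^*)$ yields $\sum_j \delta(\mathcal{D}_iV)\sharp \delta(t_j)\sharp (\partial_j P_i)^*$, and combining with the first identity gives
$$\varphi(\mathcal{D}_iV(X)\cdot P_i(X)^*) - \varphi(\mathcal{D}_iV(X))\varphi(P_i(X)^*) = \sum_j (\varphi\otimes\varphi^{op})\bigl(A_{ij}(X)\cdot (\partial_j P_i)^*(X)\bigr).$$
Summing over $i$ produces exactly $\langle A(X), [\mathcal{J}P](X)\rangle_{(\varphi\otimes\varphi^{op})\circ\Tr}$ on the right-hand side, and when $\varphi([\mathcal{D}V](X)) = 0$ the centering term on the left vanishes, so $A(X)$ is a Stein kernel.

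For the norm bound with $V = \tfrac{1}{2}\sum_k t_k^2$ one has $\mathcal{D}_iV = t_i$ and hence $A_{ij}(X) = \tfrac{1}{2}\delta(x_i)\sharp \delta(x_j)$. Split
$$\|A(X) - (1\otimes 1^{op})\otimes I_n\|^2 = \sum_{i,j}\|A_{ij}(X) - \delta_{ij}(1\otimes 1)\|^2,$$
expand each $A_{ij}(X)$ as the four-term sum $x_ix_j\otimes 1 - x_i\otimes x_j - x_j\otimes x_i + 1\otimes x_jx_i$, and compute $(\varphi\otimes\varphi^{op})(A_{ij}\cdot A_{ij}^*)$ by multiplying out and applying $\varphi\otimes\varphi^{op}$. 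Using only $\varphi(x_i) = 0$, almost every cross term vanishes, leaving contributions involving $\varphi(x_i^4)$ on the diagonal and $\varphi(x_i^2 x_j^2)$ off-diagonal. The former is $\le m_4$ by definition, and the latter satisfies $\varphi(x_i^2 x_j^2)\le \sqrt{\varphi(x_i^4)\varphi(x_j^4)}\le m_4$ by noncommutative Cauchy-Schwarz; summing over $(i,j)$ delivers the announced bound.

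The main bookkeeping obstacle is juggling the three intertwined structures at play — the involution $*$ on $\mathcal{P}\otimes\mathcal{P}$, the product $\sharp$ (which is the multiplication in $M\otimes M^{op}$), and the flip $\sigma$ — together with the relation $\sigma(\partial_j(P^*)) = (\partial_j P)^*$. Once this compatibility is pinned down and the flipped Voiculescu formula is derived, the first half of the proof is a short chain of substitutions, and the norm bound reduces to an explicit finite trace calculation.
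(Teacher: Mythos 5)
Your proof of the kernel identity is correct and is essentially the paper's argument. The paper likewise collapses $\sum_k A_{ik}\sharp(\partial_k P_i)^*$ into $\tfrac12\,\delta(\mathcal D_iV)\sharp\delta(P_i)^*$ by means of \eqref{eq:Voi-formula} and then applies $\varphi\otimes\varphi^{op}$ to the four-term expansion of $\delta(\mathcal D_iV)\sharp\delta(P_i^*)$; your detour through the flip $\sigma$ is just an equivalent way of producing the adjointed form of \eqref{eq:Voi-formula}, which the paper obtains directly from $(Q_1\sharp Q_2)^*=Q_2^*\sharp Q_1^*$ and $\delta(t_k)^*=\delta(t_k)$. (Both your argument and the paper's quietly use $\varphi(P^*Q)=\varphi(QP^*)$, i.e.\ traciality, in the final collapsing step.)

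The norm bound is where your sketch has a genuine gap. Expanding $(\varphi\otimes\varphi^{op})\bigl(A_{ij}(X)A_{ij}(X)^*\bigr)$ and discarding everything containing a first moment does \emph{not} leave only fourth-moment terms: the surviving contribution is
$$\tfrac14\Bigl(2\varphi(x_ix_j^2x_i)+2\varphi(x_ix_j)^2+2\varphi(x_jx_i)^2+2\varphi(x_i^2)\varphi(x_j^2)\Bigr),$$
and the last three summands involve no first moments, so centering alone does not kill them. These covariance-product terms are precisely what generate the $n^2/2$ and the $-n$ in the announced constant, and controlling them --- together with the cross term $-2\operatorname{Re}\langle A(X),(1\otimes 1^{op})\otimes I_n\rangle=-2\sum_i\varphi(x_i^2)$, which your decomposition $\sum_{i,j}\|A_{ij}-\delta_{ij}1\otimes1\|^2$ hides but does not dispose of --- requires the normalization $\varphi(x_ix_j)=\delta_{ij}$, not merely $\varphi(x_i)=0$. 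So as written your bookkeeping cannot produce the stated bound; you must record these terms and invoke unit covariance (which the paper itself uses tacitly despite hypothesizing only that $X$ is centered; its own count of the $\varphi(x_ix_j)\varphi(x_jx_i)$ terms also appears to carry a typo). Once that is fixed, your remaining step $\varphi(x_ix_j^2x_i)=\varphi(x_j^2x_i^2)\le\sqrt{\varphi(x_i^4)\varphi(x_j^4)}\le m_4$ by Cauchy--Schwarz is fine and the computation proceeds as in the paper.
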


\begin{proof}
Let $P\in \mathcal{P}$ be given. Remark that, in $M_n(M \otimes M^{op})$,
$$A(X)\cdot ([\mathcal{J}P](X))^*=[A\sharp (\mathcal{J}P)^*](X),$$
where $A\sharp (\mathcal{J}P)^*$ is the element of $M_n(\mathcal{P} \otimes\mathcal{P})$ whose $(i,j)$-coefficient is
\begin{align*}
[A\sharp (\mathcal{J}P)^*)]_{i,j}&=\sum_{k=1}^n\frac{1}{2}\Big(\delta(\mathcal{D}_iV)\sharp [\delta(t_k)]\Big)\sharp \Big((\partial_k P_j)^*\Big)\\
&=\sum_{k=1}^n\frac{1}{2}\delta(\mathcal{D}_iV)\sharp \Big( [\delta(t_k)]^*\sharp (\partial_k P_j)^*\Big) =\sum_{k=1}^n\frac{1}{2}\delta(\mathcal{D}_iV)\sharp \Big( \big( (\partial_k P_j) \sharp [\delta(t_k)]\big)^*\Big)\\
&=\frac{1}{2}\delta(\mathcal{D}_iV)\sharp \bigg(\Big( \sum_{k=1}^n (\partial_k P_j) \sharp [\delta(t_k)] \Big)^*\bigg) \stackrel{\eqref{eq:Voi-formula}}{=} \frac{1}{2}\delta(\mathcal{D}_iV)\sharp \Big( \delta(P_j)^*\Big).
\end{align*}
If $i=j$, we can pursue the computation of the diagonal $(i,i)$-coefficient, namely
\begin{align*}
[A\sharp (\mathcal{J}P)^*)]_{i,i}&=\frac{1}{2}(\mathcal{D}_iV\otimes 1-1\otimes\mathcal{D}_iV)\sharp ( P_i^*\otimes 1-1\otimes P_i^* )\\
&=\frac{1}{2}\Big((\mathcal{D}_iV)P_i^*\otimes 1-P_i^*\otimes (\mathcal{D}_iV)-(\mathcal{D}_iV)\otimes P_i^* +1\otimes (\mathcal{D}_iV)P_i^*\Big),
\end{align*}
from which we deduce that
$$(\varphi \otimes \varphi^{op})\left([A\sharp (\mathcal{J}P)^*)]_{i,i}(X)\right)=\varphi\Big( \big([\mathcal{D}_i V](X)-\varphi([\mathcal{D}_i V](X)) \big) P_i^*(X) \Big).$$
Using this observation, we may now check that
\begin{align*}
\langle A(X), &[\mathcal{J}P](X) \rangle_{(\varphi \otimes \varphi^{op})\circ \Tr}=(\varphi \otimes \varphi^{op})\circ \Tr(A(X)\cdot ([\mathcal{J}P](X))^*)\\
&=\sum_{i=1}^n(\varphi \otimes \varphi^{op})\big([A\sharp (\mathcal{J}P)^*)]_{i,i}(X)\big)\\
&=\sum_{i=1}^n \varphi\Big( \big([\mathcal{D}_i V](X)-\varphi([\mathcal{D}_i V](X))\big) P_i^*(X)\Big)\\
&=\langle [\mathcal{D}V](X)-\varphi([\mathcal{D}V](X)), P(X)\rangle_{\varphi} 
\end{align*}
which is the asserted formula.
Now, if $V=\frac{1}{2}\sum_{i=1}^n t_i^2$ and $X$ is centered, we have
$$A(X)=\frac{1}{2}\Big((x_i\otimes 1-1\otimes x_i)\cdot (x_j\otimes 1-1\otimes x_j)\Big)_{i,j=1}^n$$
and consequently,
\begin{align*}&\|A - (1 \otimes 1^{op})\otimes I_n\|^2_{L^2(M_n(M \bar{\otimes} M^{op}, (\varphi \otimes \varphi^{op})\circ \Tr))}\\&=\|A\|^2_{L^2(M_n(M \bar{\otimes} M^{op}, (\varphi \otimes \varphi^{op})\circ \Tr))}-n\\
&=\frac{1}{4}\sum_{i,j=1}^n\Big(2\varphi(x_ix_j^2x_i)+2\varphi(x_ix_j)\varphi(x_jx_i)++2\varphi(x_j^2)\varphi(x_i^2)\Big)-n\\
&\leq \frac{1}{2}(n^2 m_4+n+n^2)-n=(n^2+n^2 m_4-n)/2,
\end{align*}
which concludes the proof. 
\end{proof}

\section{Construction via a free Poincar\'e inequality}

In this section, we adapt the construction of \cite{CFP17} to the free setting. To state our results, we first define free Poincar\'e inequalities: 

\begin{defn}
A self-adjoint $n$-tuple $X$ satisfies a free Poincar\'e inequality with constant $C$ is for any polynomial $P \in \mathcal{P}$ we have
$$\|P(X) - \varphi(P(X))\|_2^2 \leq C\underset{i=1}{\stackrel{n}{\sum}} \hspace{1mm} \|\partial_i P(X)\|_2^2.$$
The best possible constant in this inequality will be denoted by $C_{opt}(X)$. 
\end{defn}

In \cite{Bia03}, Biane showed that the best constant in the Poincar\'e inequality for a semicircular law is one, and that this property characterizes it among all $n$-tuples with covariance matrix equal to the identity. We shall later see a refinement of this result. The Poincar\'e constant is always finite, and satisfies the following bound: 

\begin{prop}\label{prop:Poincare}
Consider a self-adjoint $n$-tuple $X=(x_1,\dots,x_n)$. For any self-adjoint $P \in \mathcal{P}$ we have
$$\|P(X) - \varphi(P(X))\|_2^2 \leq 2n\|X\|^2\underset{i=1}{\stackrel{n}{\sum}} \hspace{1mm} \|\partial_i P(X)\|_2^2,$$
where $\|X\| := \max^n_{i=1} \|x_i\|$. If $P$ is not self-adjoint, we have
$$\|P(X) - \varphi(P(X))\|_2^2 \leq 4n\|X\|^2\underset{i=1}{\stackrel{n}{\sum}} \hspace{1mm} \|\partial_i P(X)\|_2^2,$$
so that $C_{opt}(X) \leq  4n\|X\|^2$. 
\end{prop}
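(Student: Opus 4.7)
The plan is to derive both inequalities from Voiculescu's identity \eqref{eq:Voi-formula} evaluated at $X$, namely
$$\delta(P)(X) = \sum_{i=1}^n \partial_i P(X) \cdot \bigl(x_i \otimes 1 - 1 \otimes x_i\bigr),$$
where the product is taken in $M \bar\otimes M^{op}$. A Cauchy--Schwarz bound on this sum, combined with the standard estimate $\|AB\|_{L^2} \leq \|A\|_{L^2}\|B\|_\infty$ in a tracial $W^*$-algebra and the obvious operator-norm inequality $\|x_i \otimes 1 - 1 \otimes x_i\|_\infty \leq 2\|x_i\| \leq 2\|X\|$, immediately yields
$$\|\delta(P)(X)\|_{L^2(M \bar\otimes M^{op})}^2 \leq 4n\|X\|^2 \sum_{i=1}^n \|\partial_i P(X)\|_{L^2}^2.$$

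First I would handle the self-adjoint case. If $P = P^*$, then $\delta(P)(X) = P(X) \otimes 1 - 1 \otimes P(X)$ is self-adjoint in $M \bar\otimes M^{op}$, and a direct expansion of its square using the multiplication rule $(a \otimes b)(c \otimes d) = ac \otimes db$ of $M \otimes M^{op}$, followed by the product state $\varphi \otimes \varphi^{op}$, gives
$$\|\delta(P)(X)\|_{L^2}^2 = 2\bigl(\varphi(P(X)^2) - \varphi(P(X))^2\bigr) = 2\|P(X) - \varphi(P(X))\|_2^2.$$
Inserting this into the previous display cancels a factor of $2$ and produces the self-adjoint inequality with constant $2n\|X\|^2$.

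For a general $P$, I would decompose $P = P_1 + iP_2$ with $P_j = P_j^*$, apply the self-adjoint bound to each $P_j$, and recombine. Traciality of $\varphi$ makes the cross term vanish, so $\|P(X) - \varphi(P(X))\|_2^2 = \|P_1(X) - \varphi(P_1(X))\|_2^2 + \|P_2(X) - \varphi(P_2(X))\|_2^2$. To control $\|\partial_i P_j(X)\|_{L^2}$ in terms of $\|\partial_i P(X)\|_{L^2}$, I would use the identity $\partial_i(P^*) = (\sigma \circ \partial_i P)^*$ (checked on monomials and extended via linearity and the Leibniz rule) together with the fact that both the flip $\sigma$ and the involution are $L^2$-isometries on $M \bar\otimes M^{op}$ --- a consequence of the traciality of $\varphi$. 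The triangle inequality then gives $\|\partial_i P_j(X)\|_{L^2} \leq \|\partial_i P(X)\|_{L^2}$, and summing incurs the announced factor-of-$2$ loss, producing the constant $4n\|X\|^2$. The bound on $C_{opt}(X)$ is then immediate. The main technical care lies in the bookkeeping with the opposite multiplication when establishing $\|\delta(P)(X)\|_{L^2}^2 = 2\|P(X) - \varphi(P(X))\|_2^2$; once that identity is in hand, the rest is a matter of invoking traciality at the right places.
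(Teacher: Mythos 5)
Your self-adjoint case is handled correctly, and in more detail than the paper itself, which only cites Voiculescu's result (see \cite{Dab10, MS17}) for this step: the evaluated identity $\delta(P)(X)=\sum_{i}\partial_iP(X)\cdot(x_i\otimes 1-1\otimes x_i)$, the estimate $\|QB\|_2\le\|Q\|_2\|B\|_\infty$ (which in fact holds for the GNS norm of \emph{any} state, not only tracial ones, since $QBB^*Q^*\le\|B\|^2\,QQ^*$), and the computation $\|p\otimes 1-1\otimes p\|_2^2=2\big(\varphi(p^2)-\varphi(p)^2\big)$ for self-adjoint $p$ all go through with no traciality assumption. So the first inequality is fully proved.

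The gap is in the passage to general $P$: you invoke traciality of $\varphi$ twice, whereas the paper works with an arbitrary faithful normal state and explicitly singles out the tracial case as a separate improvement in the remark following the proposition. Concretely: (i) the cross term in $\|P(X)-\varphi(P(X))\|_2^2$ equals $i\varphi([b,a])$ for the centered self-adjoint parts $a,b$, and it does not vanish for non-tracial $\varphi$ --- the paper instead uses the crude bound $\|a+ib\|_2^2\le 2\|a\|_2^2+2\|b\|_2^2$, which costs nothing in the final constant; (ii) neither the flip $\sigma$ nor the involution is separately an $L^2(\varphi\otimes\varphi^{op})$-isometry without traciality, since $\|a\otimes b\|_2^2=\varphi(aa^*)\varphi(b^*b)$ while $\|(a\otimes b)^*\|_2^2=\varphi(a^*a)\varphi(bb^*)$. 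Fortunately, through $\partial_i(P^*)=(\sigma\circ\ast)(\partial_iP)$ you only ever need the \emph{composite} $\sigma\circ\ast$, and that composite is an isometry for any state because it sends $a\otimes b$ to $b^*\otimes a^*$ with $\|b^*\otimes a^*\|_2^2=\varphi(b^*b)\varphi(aa^*)$. With these two repairs your argument yields the stated constant $4n\|X\|^2$ in full generality. Note finally that when $\varphi$ \emph{is} tracial your detour is also lossy: then $\|\delta(P)(X)\|_2^2=2\|P(X)-\varphi(P(X))\|_2^2$ holds for every $P$, the self-adjoint argument applies verbatim, and one gets the sharper bound $C_{opt}(X)\le 2n\|X\|^2$ recorded in the paper's remark.
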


The first part is a result of Voiculescu, which can be found in \cite{Dab10, MS17}. The second part is a trivial extension to non self-adjoint polynomials: if we consider a general $P$, it can be written as $P = P_1 + iP_2$. Then 
\begin{align*}
\|P(X) - \varphi(P(X))\|_2^2 &\leq 2\|P_1(X) - \varphi(P_1(X))\|_2^2 + 2\|P_2(X) - \varphi(P_2(X))\|_2^2 \\
&\leq 4n\|X\|^2\underset{j=1}{\stackrel{n}{\sum}} \hspace{1mm} \|\partial_j P_1(X)\|_2^2 + \|\partial_j P_2(X)\|_2^2 \\
&= 4n\|X\|^2\underset{j=1}{\stackrel{n}{\sum}} \hspace{1mm} \|\partial_j P(X)\|_2^2.
\end{align*}

We note that if $\varphi$ is tracial, then the bound on $C_{opt}(X)$ given in Proposition \ref{prop:Poincare} can be improved to $C_{opt}(X) \leq 2n\|X\|^2$, with the same proof based on \eqref{eq:Voi-formula} as in the self-adjoint case.

The construction we shall now see will lead to the following estimates:  

\begin{thm} \label{thm_stein_const2}
Assume that $\varphi([\mathcal{D}V](X)) = 0$. Then there exists a free Stein kernel $A$ for $X$ relative to $V$, and moreover it satisfies 
$$\Sigma^*(X|V)^2 \leq \|A - (1 \otimes 1^{op})\otimes I_n\|_{(\varphi \otimes \varphi^{op})\circ \Tr}^2 \leq n + C_{opt}(X)\|[\mathcal{D}V](X)\|_2^2 - 2\langle [\mathcal{D}V](X), X\rangle_{\varphi}.$$

In particular, if $X=(x_1,\dots,x_n)$ is centered and satisfies $\sum^n_{i=1} \varphi(x_i^2) = n$, then we have
$$\Sigma^*\left(X \left|\frac{1}{2}\sum_{i=1}^n t_i^2\right.\right)^2 \leq n(C_{opt}(X)-1).$$
\end{thm}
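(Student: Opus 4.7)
The strategy, adapted from \cite{CFP17}, is to obtain the Stein kernel $A$ as the Riesz representation of a suitable continuous linear functional defined on the subspace spanned by evaluated Jacobians inside the Hilbert space $L^2(M_n(M\bar{\otimes} M^{op}), (\varphi\otimes\varphi^{op})\circ\Tr)$. More precisely, I would introduce the linear subspace $\mathcal{V} := \{[\mathcal{J}P](X) : P \in \mathcal{P}^n\}$ and define the map
$$L:\ \mathcal{V} \to \mathbb{C},\qquad [\mathcal{J}P](X) \mapsto \langle [\mathcal{D}V](X), P(X)\rangle_\varphi.$$

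The first nontrivial step is to check that $L$ is well-defined, i.e.\ that $[\mathcal{J}P](X) = 0$ implies $\langle [\mathcal{D}V](X), P(X)\rangle_\varphi = 0$. If $\partial_i P_j(X) = 0$ for all $i,j$, the free Poincar\'e inequality of Proposition \ref{prop:Poincare}, applied componentwise, forces each $P_j(X)$ to equal the scalar $\varphi(P_j(X))$, and the assumption $\varphi([\mathcal{D}V](X)) = 0$ then ensures that $\langle [\mathcal{D}V](X), P(X)\rangle_\varphi = 0$. This is where both hypotheses of the theorem enter essentially.

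Next I would bound $L$: Cauchy--Schwarz together with Proposition \ref{prop:Poincare} (applied componentwise once again) yields
$$|L([\mathcal{J}P](X))| \leq \|[\mathcal{D}V](X)\|_2 \cdot \|P(X) - \varphi(P(X))\|_2 \leq \sqrt{C_{opt}(X)}\,\|[\mathcal{D}V](X)\|_2 \cdot \|[\mathcal{J}P](X)\|_2.$$
Hence $L$ extends continuously to the $L^2$-closure $\overline{\mathcal{V}}$, and the Riesz representation theorem produces a unique $A \in \overline{\mathcal{V}}$ with $L(v) = \langle A, v\rangle$ for every $v\in\overline{\mathcal{V}}$ and $\|A\|_2^2 \leq C_{opt}(X)\|[\mathcal{D}V](X)\|_2^2$. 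By construction $A$ satisfies the defining identity of a free Stein kernel.

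To conclude, I would compute $\|A - (1\otimes 1^{op})\otimes I_n\|_2^2$ by recognising that $(1\otimes 1^{op})\otimes I_n = [\mathcal{J}(t_1,\dots,t_n)](X) \in \mathcal{V}$, so that
$$\langle A, (1\otimes 1^{op})\otimes I_n\rangle = L([\mathcal{J}(t_1,\dots,t_n)](X)) = \langle [\mathcal{D}V](X), X\rangle_\varphi,$$
while $\|(1\otimes 1^{op})\otimes I_n\|_2^2 = n$. Expanding the square and inserting the bound on $\|A\|_2^2$ yields the announced inequality, and $\Sigma^*(X|V)^2 \leq \|A-(1\otimes 1^{op})\otimes I_n\|_2^2$ is immediate from the definition of the discrepancy. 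The special case follows by substituting $[\mathcal{D}V](X) = X$ and using $\sum_i \varphi(x_i^2) = n$ on both resulting terms. The main subtle point is the well-definedness check in the first step, since it is the only place where the hypothesis $\varphi([\mathcal{D}V](X)) = 0$ is used in a genuine way; the remainder is a routine Riesz/Cauchy--Schwarz computation.
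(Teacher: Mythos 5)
Your proposal is correct and follows essentially the same route as the paper: the paper defines the functional $f_{V,X}(P)=\langle [\mathcal{D}V](X),P(X)\rangle_\varphi$ on the Sobolev-type space $H^1(\mu)$ (which is isometric, via $Q\mapsto[\mathcal{J}Q](X)$, to your closed subspace $\overline{\mathcal{V}}$ of Jacobians), bounds it using the centering hypothesis plus the Poincar\'e inequality exactly as you do, and applies Riesz representation; your separate well-definedness check is subsumed by the continuity estimate, and your explicit computation of the cross term $\langle A,(1\otimes 1^{op})\otimes I_n\rangle$ just spells out what the paper leaves implicit when "expanding the square."
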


Before giving the actual construction, it will be convenient to introduce the free Sobolev space $H^1(\mu)$ associated to a noncommutative distribution $\mu: \mathcal{P} \to \mathbb{C}$. We can think of it as the distribution associated to a $n$-tuple of variables $\mu: \mathcal{P} \to \mathbb{C}, P \mapsto \varphi(P(X))$. We can define on $\mathcal{P}^n$ a sesqui-linear form $\langle \cdot,\cdot \rangle_{H^1(\mu)}$ by
$$\langle P, Q \rangle_{H^1(\mu)} := (\mu\otimes\mu)\circ \Tr\big((\mathcal{J}Q)^\ast \sharp (\mathcal{J}P)\big) \qquad\text{for $P,Q\in\mathcal{P}^n$}.$$
The latter then induces an inner product on the quotient space $\mathcal{P}^n / \mathcal{N}_\mu$, which we denote again by $\langle \cdot,\cdot\rangle_{H^1(\mu)}$, where $\mathcal{N}_\mu := \{ P\in \mathcal{P}^n \mid \langle P, P\rangle_{H^1(\mu)} = 0\}$. Unlike in the classical setting, there may be nonconstant tuples of polynomials in $\mathcal{N}_\mu$, since due to relations between the $x_i$ the Jacobian matrix $[\mathcal{J}P](X)$ may be zero. The Hilbert space obtained by completing $\mathcal{P}^n / \mathcal{N}_\mu$ with respect to the norm $\|\cdot\|_{H^1(\mu)}$ induced by $\langle \cdot,\cdot \rangle_{H^1(\mu)}$ is then denoted by $H^1(\mu)$.

\begin{rmq}
In view of the classical setting, it would be more natural to define the scalar product on $H^1(\mu)$ as $\mu(Q^\ast P) + (\mu\otimes\mu)\circ \Tr\big((\mathcal{J}Q)^\ast \sharp (\mathcal{J}P)\big)$. Because of the Poincar\'e inequality, the two norms would be equivalent on $\mathcal{P}^n / \mathcal{N}_\mu$, and for our purpose the definition we gave above is better adapted. 
\end{rmq}

\begin{proof}[Proof of Theorem \ref{thm_stein_const2}]
Let $\mu$ be the distribution associated to the $X_i$. The application 
$$f_{V,X}:\ P \mapsto \langle [\mathcal{D}V](X), P(X)\rangle_{\varphi}$$
is a linear form over $\mathcal{P}^n$. We shall show that there exists a constant $C>0$ such that
\begin{equation}\label{eq:functional-bound}
|f_{V,X}(P)| \leq C\|P\|_{H^1(\mu)} \qquad\text{for any $P \in \mathcal{P}^n$},
\end{equation}
where $\|\cdot\|_{H^1(\mu)}$ stands for the semi-norm induced by $\langle \cdot,\cdot \rangle_{H^1(\mu)}$ on $\mathcal{P}^n$. Once we will have proved this, we may infer from the definition of $\mathcal{N}_\mu$ that $f_{V,X}$ is invariant by adding an element of $\mathcal{N}_\mu$, so that we can view it as a linear form on the quotient space $H^1(\mu)$; in fact, we see that this yields a continuous form on $H^1(\mu)$. Thus, we shall be able to apply the Riesz representation theorem: there exists a unique element $Q_0$ of $H^1(\mu)$ such that for any element $P$ of $H^1(\mu)$ we have
$$f_{V,X}(P) = \langle Q_0, P \rangle_{H^1(\mu)}.$$
The map $Q\mapsto [\mathcal{J}Q](X)$ is an isometry from $\mathcal{P}^n / \mathcal{N}_\mu$ to $L^2(M_n(M \bar{\otimes} M^{op}, (\varphi \otimes \varphi^{op})\circ \Tr))$ which extends to $H^1(\mu)$, and denoting by $[\mathcal{J}Q_0](X)$ the image of $Q_0$ via this isomorphism, we have
$$f_{V,X}(P) = \langle [\mathcal{J}Q_0](X), [\mathcal{J}P](X) \rangle_{(\varphi \otimes \varphi^{op})\circ \Tr}$$
for any element $P$ of $\mathcal{P}^n$
and hence $[\mathcal{J}Q_0](X)$ would be a free Stein kernel for $X$, relative to $V$.

So all that is left to show is \eqref{eq:functional-bound}. As pointed out in \cite{CFP17}, this continuity can be proved when a Poincar\'e inequality holds. Indeed, if we consider an element $P = (P_1,\dots,P_n) \in \mathcal{P}^n$, then we have, by Proposition \ref{prop:Poincare} and since $\varphi([\mathcal{D}V](X)) = (0,\dots,0)$ by assumption,
\begin{align*}
|f_{V,X}(P)| &= |\langle [\mathcal{D}V](X), P(X) - \varphi(P(X))\rangle_{\varphi}|\\
&\leq \|[\mathcal{D}V](X)\|_2 \sqrt{\sum_i \|P_i(X) - \varphi(P_i(X))\|_2^2} \\
&\leq \|[\mathcal{D}V](X)\|_2\sqrt{C_{opt}(X) \sum_{i,j} \|\partial_j P_i(X)\|_2^2} \\
&= \sqrt{C_{opt}(X)}\|[\mathcal{D}V](X)\|_2 \times \|P\|_{H^1(\mu)},
\end{align*}
which establishes \eqref{eq:functional-bound} with $C = \sqrt{C_{opt}(X)}\|[\mathcal{D}V](X)\|_2$.

This implies existence of a Stein kernel $A = [\mathcal{J}Q_0](X)$, and moreover it satisfies $\|A\|_2^2 \leq C_{opt}(X)\|[\mathcal{D}V](X)\|_2^2$. Expanding the square in the definition of the discrepancy then yields
$$\Sigma^*(X|V)^2 \leq n + C_{opt}(X)\|[\mathcal{D}V](X)\|_2^2 - 2\langle [\mathcal{D}V](X), X\rangle_{\varphi}. $$
\end{proof}

Note that the construction is inherently different than the one from Section 2: while this one is the Jacobian of an element of $H^1(\mu)$, the first one is not. In particular, since projecting on the $L^2$-closure of the subspace $[\mathcal{J}(\mathcal{P}^n)](X) = \{[\mathcal{J} P](X) \mid P\in \mathcal{P}^n\}$ reduces the norm, the second one has the smallest possible norm, and hence gives rise to the infimum in the definition of the Stein discrepancy. More generally, two free Stein kernels for the same potential can differ only by an element in $[\mathcal{J}(\mathcal{P}^n)](X)^\bot \subset L^2(M_n(M \bar{\otimes} M^{op}, (\varphi \otimes \varphi^{op})\circ \Tr))$.

An interesting immediate corollary is the following reinforcement of Biane's characterization of the semicircular law. It is a free counterpart of a result of \cite{CFP17} on Gaussian distributions. 

\begin{cor}\label{cor:Poincare-characterization}
Let $X=(x_1,\dots,x_n)$ be an $n$-tuple of centered self-adjoint variables, and assume it satisfies $\sum^n_{i=1} \varphi(x_i^2) = n$. Then 
$$C_{opt}(X) \geq 1 + \frac{\Sigma^*\left(X| \frac{1}{2}\sum_{i=1}^n t_i^2\right)^2}{n}.$$
\end{cor}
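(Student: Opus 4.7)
The plan is to invoke Theorem \ref{thm_stein_const2} in the special case of the quadratic potential $V = \frac{1}{2}\sum_{i=1}^n t_i^2$. First I would observe that for this choice of $V$ one has $\mathcal{D}_i V = t_i$, so that $[\mathcal{D}V](X) = X$ and the hypothesis $\varphi([\mathcal{D}V](X)) = 0$ needed to apply the theorem is exactly the centering assumption $\varphi(x_i) = 0$ for $i = 1,\dots, n$.

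Next I would appeal to the final assertion of Theorem \ref{thm_stein_const2}, which under precisely these hypotheses reads
$$\Sigma^*\left(X \left|\tfrac{1}{2}\sum_{i=1}^n t_i^2\right.\right)^2 \leq n(C_{opt}(X)-1).$$
(This is itself deduced from the general bound $n + C_{opt}(X)\|[\mathcal{D}V](X)\|_2^2 - 2\langle [\mathcal{D}V](X), X\rangle_{\varphi}$ by substituting $[\mathcal{D}V](X) = X$ and using that self-adjointness together with the trace normalization gives $\|X\|_2^2 = \sum_{i=1}^n \varphi(x_i^2) = n$ and $\langle X, X\rangle_{\varphi} = n$.) Rearranging this inequality by dividing through by $n$ and adding $1$ yields the claimed lower bound on $C_{opt}(X)$.

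There is in fact no real obstacle here: the substantive content, namely the Riesz-representation construction of a Stein kernel with $L^2$-norm controlled by the Poincar\'e constant, has already been carried out in the proof of Theorem \ref{thm_stein_const2}. The corollary is just the rearrangement of that bound in the quadratic case, and its interest lies in the \emph{interpretation}: Biane's characterization of the semicircular law as the unique minimizer of $C_{opt}$ (with value $1$) among centered $n$-tuples normalized by $\sum_i \varphi(x_i^2) = n$ is quantitatively strengthened, since the gap $C_{opt}(X) - 1$ now dominates $\Sigma^*(X\vert \tfrac{1}{2}\sum t_i^2)^2/n$, which itself vanishes exactly when $X$ is a free semicircular family.
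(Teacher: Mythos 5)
Your proposal is correct and matches the paper exactly: the corollary is stated there as an immediate consequence of Theorem \ref{thm_stein_const2}, obtained by specializing to $V=\frac{1}{2}\sum_i t_i^2$ (so $[\mathcal{D}V](X)=X$), using the centering and normalization hypotheses to reduce the general bound to $\Sigma^*(X|V)^2\le n(C_{opt}(X)-1)$, and rearranging. Nothing is missing.
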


Hence not only the semicircular law is the only isotropic distribution with Poincar\'e constant equal to one, but among all such distributions, if the Poincar\'e constant is close to one, then the variable must be close to a semicircular law.
In fact, as Corollary \ref{cor:Poincare-characterization} shows, this holds not only for isotropic distributions but more generally for distributions that come from $n$-tuples $X=(x_1,\dots,x_n)$ of centered self-adjoint variables satisfying $\sum^n_{i=1} \varphi(x_i^2) = n$. 

\vspace{3mm}

\underline{\textbf{ Acknowledgments }} : G. C. and M. F. were partly supported by the Project MESA (ANR-18-CE40-006) of the French National Research Agency (ANR). M. F. was also partly supported by Project EFI (ANR-17-CE40-0030) and ANR-11-LABX-0040-CIMI within the program ANR-11-IDEX-0002-02.

T. M. was supported by the ERC Advanced Grant NCDFP (339760) held by Roland Speicher.

{\small

}
\end{document}